\documentclass[a4paper, 12pt]{article}
\setlength{\oddsidemargin}{2mm}
\setlength{\evensidemargin}{2mm}
\setlength{\topmargin}{-8mm}
\setlength{\textwidth}{156mm}
\setlength{\textheight}{230mm}

\usepackage{amssymb}
\usepackage{amsmath}
\usepackage{amsthm}
\usepackage[dvipdfmx]{graphicx}
\usepackage{verbatim,enumerate}
\usepackage[active]{srcltx}
\usepackage{cite}
 \newtheorem{theorem}{Theorem}[section]
 \newtheorem*{theorem*}{Theorem}
 \newtheorem*{lemma*}{Lemma}
 \newtheorem{proposition}[theorem]{Proposition}
 \newtheorem{fact}[theorem]{Fact}
 \newtheorem{fact*}{Fact}
 \newtheorem{lemma}[theorem]{Lemma}
 
\theoremstyle{definition}

 \newtheorem*{remark*}{Remark}
 
 \newtheorem{example}[theorem]{Example}
\numberwithin{equation}{section}

\usepackage[usenames]{color}


\newcommand{\R}{\boldsymbol{R}}

\newcommand{\trans}[1]{{\vphantom{#1}}^t{\!#1}}
\renewcommand{\phi}{\varphi}

\newcommand{\A}{\mathcal{A}}

\newcommand{\pmt}[1]{{\begin{pmatrix} #1  \end{pmatrix}}}

\newcommand{\mycomment}[1]{}
\makeatletter
  \newcommand{\subsubsubsection}{\@startsection{paragraph}{4}{\z@}%
    {-1ex \@plus -1ex \@minus -.2ex}%
    {1.0ex \@plus.2ex}
    {\reset@font\bfseries\normalsize}
  }
  \makeatother
  \setcounter{secnumdepth}{4}
\begin{document}
\begin{center}
{\large {\bf 
Singular surfaces 
of revolution with prescribed unbounded mean curvature}}
\\[2mm]
December 20, 2017
\\[2mm]
\renewcommand{\thefootnote}{\fnsymbol{footnote}}
Luciana F. Martins,
Kentaro Saji,
Samuel P. dos Santos
 and
Keisuke Teramoto
\footnote[0]{ 2010 Mathematics Subject classification. Primary
57R45 ; Secondary 53A05.}
\footnote[0]{Keywords and Phrases. Cuspidal edge, mean curvature,
revolution surface, periodicity}
\footnote[0]{
Saji is partly supported by the
JSPS KAKENHI Grant Number 26400087.
Santos was supported by
CAPES and S\~ao Paulo Research Foundation-FAPESP, grant 2016/21226-5.
Teramoto is partly supported by the
JSPS KAKENHI Grant Number 17J02151.}

\begin{abstract}
{\small 
We give an explicit formula for singular surfaces 
of revolution with prescribed unbounded mean curvature.
Using it, we give conditions for singularities 
of that surfaces. Periodicity of that surface
is also discussed.}
\end{abstract}
\end{center}
\section*{Introduction}
In this note, we study 
singular surfaces of revolution.
Let $I\subset \R$ be a subset, and
$\gamma:I\to\R^2$ a $C^\infty$ plane curve.
We set $\gamma(t)=(x(t),y(t))$ $(y>0)$,
and set the revolution surface 
\begin{equation}\label{eq:revol}
s(t,\theta)=\big(x(t),y(t)\cos\theta,y(t)\sin\theta\big)
\end{equation}
of $\gamma$.
The curve $\gamma$ is called the {\it profile curve\/}
of $s$.
We denote by $H(t)$ the mean curvature of $s(t,\theta)$.
Given a $C^\infty$ function $H(t)$ on $I$, 
it is given by Kenmotsu \cite{k1}
that the concrete solution of the profile curve 
$(x(t),y(t))$ satisfying
the revolution surface $s(t,\theta)$ has the mean curvature
$H(t)$.
Moreover, the periodicity of $s$ is also studied \cite{k2}.

On the other hand, in the recent decades, 
there are several articles 
concerning differential geometry of singular curves and
surfaces, namely,
curves and surfaces with singular points,
in the $2$ and $3$ dimensional Euclidean spaces 
\cite{bw,fh,ft,hnuy,hnuy2,irrf,MSUY,nuy,OT,front,ss}.
If the profile curve $\gamma$ is regular, then
the mean curvature $H$ is differentiable on $I$, but
if $\gamma$ has a singularity, then
$H$ may diverge \cite{front} (see also \cite{MSUY}).
Given a $C^\infty$ function $H$ defined on $I\setminus P$,
where $P$ is a discrete set,
we give a concrete solution $\gamma=(x,y)$
such that the mean curvature of revolution surface
of $\gamma$ is $H$.
Moreover, we give conditions for the 
fundamental singularities of $\gamma$.
We also discuss the periodicity of the surface.

\section{Construction of singular surfaces of revolution}
Let $I\subset \R$ and $\gamma:I\to\R^2$ be a
$C^\infty$ curve.
We set $\gamma(t)=(x(t),y(t))$, and assume 
$y(t)>0$ for any $t\in I$.
We assume that there exists a $C^\infty$ map $\phi:I\to\R$
satisfying that
$\gamma'(t)$ and $(\cos\phi(t),\sin\phi(t))$ are linearly dependent
for any $t\in I$.
Then we have a function $l:I\to\R$ such
that
$$
\gamma'(t)=l(t)e(t),\quad e(t)=(\cos\phi(t),\sin\phi(t)).
$$
This condition is equivalent to saying that $\gamma$ is a frontal
(see Section \ref{sec:sing} for detail).
We choose the unit normal vector of 
the revolution surface $s$
by 
\begin{equation}\label{eq:nuconv}
\nu(t,\theta)=\big(\sin\phi(t),
-\cos\phi(t)\cos\theta,-\cos\phi(t)\sin\theta\big).
\end{equation}
Then the mean curvature $H$ can be given
on the regular set of $s$.
We have the following lemma.
\begin{lemma}\label{lem:hetabdd}
The function\/
$Hl$ can be extended to a\/ $C^\infty$ function on\/ $I$.
\end{lemma}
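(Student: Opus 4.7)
\bigskip

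\noindent\textbf{Proof proposal.} The plan is to compute $H$ explicitly in terms of $l$, $\phi$, $\phi'$ and $y$, and observe that the singular behavior at zeros of $l$ is exactly a single pole, which is cancelled by multiplying by $l$.

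First I would rewrite $s_t$ in a form that makes the dependence on $l$ transparent. Since $\gamma'=le$ with $e=(\cos\phi,\sin\phi)$, we have
\[
s_t(t,\theta)=l(t)\,e^*(t,\theta),\qquad
e^*(t,\theta):=\bigl(\cos\phi,\sin\phi\cos\theta,\sin\phi\sin\theta\bigr),
\]
and $s_\theta=(0,-y\sin\theta,y\cos\theta)$. A direct check gives $|e^*|=1$, $e^*\cdot\nu=0$ and $s_\theta\cdot\nu=0$, so that $\nu$ in \eqref{eq:nuconv} is indeed the unit normal. From here the first fundamental form reads
\[
E=s_t\cdot s_t=l^2,\qquad F=s_t\cdot s_\theta=0,\qquad G=s_\theta\cdot s_\theta=y^2,
\]
so that $EG-F^2=l^2y^2$.

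Next, the key computational observation is that $\partial_t e^*=-\phi'\,\nu$. Differentiating $s_t=l\,e^*$ gives
\[
s_{tt}=l'\,e^*-l\phi'\,\nu,\qquad
s_{t\theta}=l\,\bigl(0,-\sin\phi\sin\theta,\sin\phi\cos\theta\bigr),\qquad
s_{\theta\theta}=(0,-y\cos\theta,-y\sin\theta),
\]
from which the coefficients of the second fundamental form come out as
\[
L=s_{tt}\cdot\nu=-l\phi',\qquad M=s_{t\theta}\cdot\nu=0,\qquad N=s_{\theta\theta}\cdot\nu=y\cos\phi.
\]
Note that $L$ carries a factor of $l$, while $N$ is independent of $l$.

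Now I plug into the usual mean curvature formula:
\[
H=\frac{EN-2FM+GL}{2(EG-F^2)}
=\frac{l^{2}y\cos\phi-l y^{2}\phi'}{2\,l^{2}y^{2}}
=\frac{l\cos\phi-y\phi'}{2\,l\,y}.
\]
Multiplying by $l$ cancels the only factor of $l$ in the denominator, giving
\[
H(t)\,l(t)=\frac{l(t)\cos\phi(t)}{2\,y(t)}-\frac{\phi'(t)}{2}.
\]
Since $y>0$ and $l,\phi\in C^\infty(I)$, the right-hand side is $C^\infty$ on all of $I$, which proves the lemma. There is no real obstacle here; the only thing one has to see is the structural identity $\partial_t e^*=-\phi'\nu$, which is what forces the factor of $l$ to appear in $L$ and thus in the numerator of $H$.
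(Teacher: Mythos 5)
Your proposal is correct and follows essentially the same route as the paper: the paper's proof is exactly the "direct computation" you carry out, arriving at the same identity $Hl=\tfrac{1}{2}\bigl(\tfrac{\cos\phi}{y}-\tfrac{\phi'}{l}\bigr)l=\tfrac{l\cos\phi}{2y}-\tfrac{\phi'}{2}$ and concluding from $y>0$. You simply supply the details (the fundamental forms and the identity $\partial_t e^*=-\phi'\nu$) that the paper leaves implicit.
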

\begin{proof}
By a direct computation, $Hl$ with respect to the 
unit normal vector \eqref{eq:nuconv} can be calculated by
$$
H(t)l(t)
=
\dfrac{1}{2}\left(
\dfrac{\cos\phi(t)}{y(t)} - \dfrac{\phi'(t)}{l(t)}
\right)l(t),
$$
where ${}'=d/dt$.
Since $y>0$, this proves the assertion.
\end{proof}
See \cite[Proposition 3.8]{MSUY} for more detailed behavior
of the mean curvature for the case of cuspidal edges.
We remark that the case $y=0$ is already considered
in \cite{k1}.

Conversely, given a $C^\infty$ function $H:I\setminus P\to \R$,
where $P$ is a discrete set,
and function $l:I\to \R$ satisfying 
that $Hl$ is a $C^\infty$ function
on $I$ and $l^{-1}(0)=P$,
we look for a surface of revolution with the profile curve
$\gamma$ whose mean curvature 
with respect to \eqref{eq:nuconv}
is $H$ and $\gamma'=l(\cos\phi,\sin\phi)$.
Then
$x,y$
satisfy the differential equation:
\begin{equation}\label{eq:mean}
2H(t)y(t)l(t) - 
l(t)\cos\phi(t)+y(t)\phi'(t)
 = 0.
\end{equation}
Following Kenmotsu \cite{k1}, we solve this equation
together with 
\begin{equation}\label{eq:tangent}
(x'(t),y'(t))=l(t)(\cos\phi(t),\sin\phi(t)).
\end{equation}
We set 
$z(t)=y(t)\sin\phi(t)+\sqrt{-1}y(t)\cos\phi(t)$.
Then \eqref{eq:mean} can be modified into
$$
z'(t)-2\sqrt{-1}H(t)z(t)l(t)-l(t)=0,
$$
and the general solution of this equation is
\begin{align*}
&z(t)=
(F(t)-c_1)\sin\eta(t)+(G(t)-c_2)\cos\eta(t)\\
&\hspace{20mm}+\sqrt{-1}\big(
(G(t)-c_2)\sin\eta(t)-(F(t)-c_1)\cos\eta(t)
\big),
\end{align*}
where $c_1,c_2\in\R$, and
$$
F(t)=\int_0^t l(u)\sin\eta(u)\, du,\quad
G(t)=\int_0^t l(u)\cos\eta(u)\, du,\quad
\eta(u)=\int_0^u 2l(v)H(v)\,dv.
$$
By $y(t)^2=|z(t)|^2$ and 
$x'(t)=l(t)\cos\phi(t)=l(t)(z(t)-\bar{z}(t))/(2\sqrt{-1}y(t))$,
we have
\begin{align}
\label{eq:soly}
y(t)&=
((F(t)-c_1)^2+(G(t)-c_2)^2)^{1/2},\\
\label{eq:solx}
x'(t)&=
\dfrac{F'(t)(G(t)-c_2)-G'(t)(F(t)-c_1)}
{((F(t)-c_1)^2+(G(t)-c_2)^2)^{1/2}}
=\dfrac{F'(t)(G(t)-c_2)-G'(t)(F(t)-c_1)}
{y(t)}.
\end{align}
We take the initial values
$c_1,c_2$ satisfying that 
$(F(t)-c_1)^2+(G(t)-c_2)^2>0$
on the considering domain.

It should be mentioned that on the set of regular points,
there is Kenmotsu's result \cite{k1},
and singularities can be considered by taking 
the limits of regular parts. 
However, we will see the class of 
singularities of $\gamma$ in Section \ref{sec:sing},
which cannot be investigated just looking at
limits of regular points.
Furthermore, we believe that
the formula \eqref{eq:solx}, \eqref{eq:soly},  
which is able to pass though the singularities,
can extend the treatment of 
singular surfaces of revolution.
We remark that there is a representation formula
\cite[Theorem 4]{k3}
for surfaces which have prescribed $H$ and 
the unit normal vector.
\section{Singularities of profile curves}\label{sec:sing}
In this section, we study conditions for singularities
of profile curves and revolution surfaces.
A singular point $p$ of a map $\gamma$ is 
called a {\it ordinary cusp\/} or {\it $3/2$-cusp\/}
if the map-germ $\gamma$ at $p$ is $\mathcal{A}$-equivalent to
$t\mapsto(t^2,t^3)$ at $0$ (Two map-germs
$f_1,f_2:(\R^m,0)\to(\R^n,0)$ are $\mathcal{A}$-{\it
equivalent}\/ if there exist diffeomorphisms
$S:(\R^m,0)\to(\R^m,0)$ and $T:(\R^n,0)\to(\R^n,0)$ such
that $ f_2\circ S=T\circ f_1 $.).
Similarly, a singular point $p$ of a map $\gamma$ is 
called a {\it $j/i$-cusp\/}
if the map-germ $\gamma$ at $p$ is $\mathcal{A}$-equivalent to
$t\mapsto(t^i,t^j)$ at $0$,
where $(i,j)=(2,5),(3,4),(3,5)$.
It is known that the singularity of $(\R,0)\to(\R^2,0)$ 
which are determined by its $5$-jet 
with respect to $\A$-equivalence are only these cusps.
Criteria for these singularities are known.
See \cite{bg} for example.
\begin{fact}\label{fact:cri}
A map-germ\/ $\alpha:(\R,p)\to \R^2$
satisfying\/ $\alpha'(p)=0$ is 
\begin{itemize}
\item a\/ $3/2$-cusp if and only if\/ 
$\det(\alpha'',\alpha''')\ne0$ holds at\/ $p$,
\item a\/ $5/2$-cusp if and only if\/ 
$\alpha''\ne0$,
$\alpha'''=k\alpha''$ and\/
$\det(\alpha'',3\alpha^{(5)}-10k\alpha^{(4)})\ne0$ hold at\/ $p$,
where $(~)^{(i)}=d^i/dt^i$,
\item a\/ $4/3$-cusp if and only if\/
$\alpha''=0$ and\/
$\det(\alpha''',\alpha^{(4)})\ne0$ hold at\/ $p$,
\item a\/ $5/3$-cusp if and only if\/
$\alpha''=0$,
$\det(\alpha''',\alpha^{(4)})=0$ and\/
$\det(\alpha''',\alpha^{(5)})\ne0$ hold at\/ $p$.
\end{itemize}
\end{fact}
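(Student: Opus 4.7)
The plan is to verify each criterion by reducing the $5$-jet of $\alpha$ to its claimed normal form through explicit polynomial changes of source parameter and target coordinates; since the $(i,j)$-cusps with $\max(i,j)\le 5$ are $5$-determined under $\mathcal{A}$-equivalence, matching $5$-jets suffices. Translate so that $p=0$ and $\alpha(0)=0$; since $\alpha'(0)=0$ we may write $\alpha(t)=\sum_{k\ge 2}\tfrac{1}{k!}\alpha^{(k)}(0)t^k$.

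In the case $\alpha''(0)\ne 0$, first pick a target basis with $\alpha''(0)/2=e_1$, then reparametrize $t=s+u_2s^2+u_3s^3+\cdots$ so that $\alpha_1(s)=s^2$ identically. The second component becomes $\alpha_2(s)=A_3s^3+A_4s^4+A_5s^5+\cdots$, and the target diffeomorphism $(x,y)\mapsto(x,\,y-A_4x^2-A_6x^3)$ kills the even-order terms. A direct expansion yields $A_3=\alpha'''_{\,2}(0)/6$, where subscript $2$ denotes the $e_2$-component, so $A_3\ne 0\iff\det(\alpha''(0),\alpha'''(0))\ne 0$, the $3/2$-criterion. When $A_3=0$ we have $\alpha'''=k\alpha''$ for some $k$, which forces the choice $u_2=-k/6$ in the parametrization; substituting and collecting the $s^5$ coefficient produces $A_5=\bigl(3\alpha^{(5)}_{\,2}(0)-10k\alpha^{(4)}_{\,2}(0)\bigr)/360$, so $A_5\ne 0\iff\det\bigl(\alpha'',\,3\alpha^{(5)}-10k\alpha^{(4)}\bigr)\ne 0$, the $5/2$-criterion.

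In the case $\alpha''(0)=0$, the leading jet is $\alpha'''(0)t^3/6$. Pick a target basis with $\alpha'''(0)/6=e_1$, reparametrize so that $\alpha_1(s)=s^3$ identically, and write $\alpha_2(s)=B_4s^4+B_5s^5+\cdots$. Since the first component has degree $3$, the only low-order target diffeomorphism corrections to $\alpha_2$ start at degree $6$, so $B_4$ and $B_5$ are genuine $\mathcal{A}$-invariants at the $5$-jet level. A direct expansion gives $B_4=\alpha^{(4)}_{\,2}(0)/24$, yielding $B_4\ne 0\iff\det(\alpha''',\alpha^{(4)})\ne 0$, the $4/3$-criterion. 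When $B_4=0$ one has $\alpha^{(4)}=k'\alpha'''$, and then $B_5=\alpha^{(5)}_{\,2}(0)/120$, so $B_5\ne 0\iff\det(\alpha''',\alpha^{(5)})\ne 0$, the $5/3$-criterion.

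The main obstacle is the $5/2$ case, where the specific combination $3\alpha^{(5)}-10k\alpha^{(4)}$ only emerges after carefully tracking how the parameter change $u_2=-k/6$ (forced by $\alpha_1=s^2$ together with $\alpha'''=k\alpha''$) feeds back into the $s^5$ coefficient of $\alpha_2$; the remaining cases reduce to reading off a single $e_2$-component of a derivative vector.
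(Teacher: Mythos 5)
Your argument is correct, and it is worth noting that the paper itself does not prove this statement at all: it is recorded as a ``Fact'' with a pointer to Bruce--Gaffney \cite{bg}, so you are supplying a proof where the authors only cite one. Your route --- normalize the lowest-order jet to $e_1$, force the first component into the monomial $s^i$ by a source reparametrization, strip removable terms from the second component by target changes, and then invoke $5$-jet determinacy (which the paper also states as known) --- is the standard way these criteria are derived, and your coefficient computations check out: with $\alpha''(0)=2e_1$ and $\alpha'''(0)=2ke_1$ one gets $t=s-\tfrac{k}{6}s^2+\cdots$, hence $t^4=s^4-\tfrac{2k}{3}s^5+\cdots$, and the $s^5$-coefficient of the second component is $\tfrac{1}{120}\alpha^{(5)}_2(0)-\tfrac{k}{36}\alpha^{(4)}_2(0)=\bigl(3\alpha^{(5)}_2(0)-10k\alpha^{(4)}_2(0)\bigr)/360$, exactly reproducing the $5/2$ condition. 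Two small points deserve care if you write this up in full. First, in the $i=3$ cases your claim that $B_5$ is an invariant ``at the $5$-jet level'' is not quite right as stated: the reparametrization $t=s+u_2s^2+\cdots$ forced by $\alpha_1=s^3$ shifts $B_5$ to $B_5+4u_2B_4$, so $B_5$ is only well defined once $B_4=0$ --- which is precisely the only case in which you use it, so the proof survives, but the sentence should be qualified. Second, for the ``only if'' directions you should say explicitly that the four condition sets are mutually exclusive and that the four normal forms are pairwise non-$\mathcal{A}$-equivalent, so that each condition set is necessary as well as sufficient; this is implicit in your reduction but worth a line.
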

A map-germ $\gamma$ at $p$ is called {\it frontal\/}
if there exists a map $n:(\R,p)\to(\R^2,0)$ 
satisfying $|n|=1$ and $\gamma'\cdot n=0$ for any $t$.
A frontal is a {\it front\/} if the pair $(\gamma,n)$ is 
an immersion.
If $\gamma$ at $p$ is a $3/2$-cusp or a $4/3$-cusp then it
is a front, and 
if $\gamma$ at $p$ is a $5/2$-cusp or a $5/3$-cusp then it
is a frontal but not a front.
By definition, $\gamma'(p)=0$ if and only if $l(p)=0$.
We have the following:
\begin{proposition}\label{prop:frontal}
The curve\/ $\gamma=(x,y)$ 
given by\/ \eqref{eq:soly}, \eqref{eq:solx}
is a frontal at any point.
Moreover, if\/ $l(p)=0$, then\/
$\gamma$ at\/ $p$ is a front if and only if\/
$\eta'(p)\ne0$.
\end{proposition}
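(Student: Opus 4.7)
The plan is to read off smooth expressions for $\cos\phi$ and $\sin\phi$ from the explicit solution, which immediately yields the frontal structure, and then compute $\phi'$ in terms of $l$ and $\eta'$ to settle the front criterion.

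First I would unpack the complex formula for $z(t)$: writing $A(t)=F(t)-c_1$ and $B(t)=G(t)-c_2$, the real and imaginary parts of $z=y\sin\phi+\sqrt{-1}\,y\cos\phi$ give
\begin{equation*}
y(t)\sin\phi(t) = A(t)\sin\eta(t)+B(t)\cos\eta(t),\qquad
y(t)\cos\phi(t) = B(t)\sin\eta(t)-A(t)\cos\eta(t).
\end{equation*}
Since the initial values $c_1,c_2$ are chosen so that $y=\sqrt{A^2+B^2}>0$ on the considered domain, these equations define $\cos\phi$ and $\sin\phi$ as $C^\infty$ functions of $t$. In particular the vector field $e(t)=(\cos\phi(t),\sin\phi(t))$ is smooth, and one checks directly (using $F'=l\sin\eta$, $G'=l\cos\eta$) that $x'=l\cos\phi$ and $y'=l\sin\phi$ on the whole domain, so $\gamma'=l\,e$. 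Taking $n(t)=(\sin\phi(t),-\cos\phi(t))$ produces a smooth unit normal satisfying $\gamma'\cdot n\equiv 0$, proving that $\gamma$ is a frontal everywhere.

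For the front assertion, recall that $(\gamma,n)$ is an immersion iff $|\gamma'|^2+|n'|^2>0$; since $n'=\phi'\,e$, at a point $p$ with $l(p)=0$ this reduces to $\phi'(p)\neq 0$. Thus I need the identity
\begin{equation*}
\phi'(t) = \frac{l(t)\cos\phi(t)}{y(t)}-\eta'(t),
\end{equation*}
which at $l(p)=0$ gives $\phi'(p)=-\eta'(p)$ and finishes the proof. To obtain this identity I would differentiate $y\cos\phi=B\sin\eta-A\cos\eta$, substitute $A'=l\sin\eta$, $B'=l\cos\eta$, and use $y'=l\sin\phi$ together with the already established expression for $y\sin\phi$; the cross terms involving $l\sin\eta\cos\eta$ cancel and $\eta'$ combines with $A\sin\eta+B\cos\eta=y\sin\phi$ cleanly. (The parallel computation starting from $y\sin\phi=A\sin\eta+B\cos\eta$ gives the same formula, confirming the identity on all of $I$, not only where $\sin\phi\cos\phi\neq 0$.)

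The only mild obstacle is the algebraic bookkeeping in the derivation of the formula for $\phi'$: one has to keep track of the two equivalent identities for $y\sin\phi$ and $y\cos\phi$ and verify that the apparent $1/\sin\phi$ or $1/\cos\phi$ denominators disappear, so that the identity is valid everywhere. Once this is in hand, the proposition follows immediately by setting $l(p)=0$.
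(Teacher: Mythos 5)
Your proof is correct and follows essentially the same route as the paper: both read off a smooth unit normal directly from the explicit solution (your $n=(\sin\phi,-\cos\phi)$ is, up to sign, the paper's $R_{-\eta+\pi/2}U/|U|$) and reduce the front condition at a point with $l(p)=0$ to the nonvanishing of the derivative of that normal, which equals $\pm\eta'(p)$ there. One remark: the identity $\phi'=l\cos\phi/y-\eta'$ that you derive by differentiating the explicit formulas is exactly the defining equation \eqref{eq:mean} rewritten via $\eta'=2Hl$, so you could have quoted it directly and skipped the bookkeeping about the apparent $1/\sin\phi$ and $1/\cos\phi$ denominators.
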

\begin{proof}
Since
$
y'
=
(F'(F-c_1)+G'(G-c_2))y^{-1},
$
we have
\begin{equation}\label{eq:gammap}
\gamma'
=
\dfrac{F'}{y}
\pmt{
G-c_2\\
F-c_1}
+
\dfrac{G'}{y}
\pmt{-(F-c_1)\\
G-c_2
}
=
\dfrac{l\cos\eta}{y}U
+
\dfrac{l\sin\eta}{y}
\pmt{0&1\\-1&0}
U
=
\dfrac{l}{y}R_{-\eta} U,
\end{equation}
where $$
\gamma'=\pmt{
x'\\
y'},\quad
U=\pmt{-(F-c_1)\\G-c_2}\quad\text{and}\quad 
R_{-\eta}=\pmt{\cos(-\eta)&-\sin(-\eta)\\ \sin(-\eta)&\cos(-\eta)}.$$
We set
$n=R_{-\eta+\pi/2}U/|U|$.
Then $|n|=1$ and $n$ is perpendicular to $\gamma'$.
Thus $\gamma$ is a frontal.
Let us assume $l(p)=0$. Then $\gamma$ at $p$ is a front
if and only if $n'(p)\ne0$.
This is equivalent to saying that $R_{-\eta+\pi/2}U$ and
$(R_{-\eta+\pi/2}U)'$ are linearly independent.
Since $l(p)=0$, 
it holds that $(R_{-\eta+\pi/2}U)'(p)=R_{-\eta+\pi/2}'(p)U(p)$,
and $R_{-\eta+\pi/2}'=-\eta'R_{-\eta+\pi}$,
we see that
$n'(p)\ne0$ is equivalent to $\eta'(p)\ne0$.
This proves the assertion.
\end{proof}
Moreover, we have the following:
\begin{proposition}\label{prop:singcond}
Let\/ $\gamma=(x,y)$ is given by\/ \eqref{eq:soly} 
and\/ \eqref{eq:solx}.
We assume that\/ $l(p)=0$,
then\/ $\gamma$ at\/ $p$ is
\begin{enumerate}
\item\label{itm:23}
 a\/ $3/2$-cusp if and only if\/ $l'\eta'\ne0$ holds at\/ $p$,
\item\label{itm:25}
 a\/ $5/2$-cusp if and only if\/ $l'\ne0$, 
$\eta'=0$ and\/
$l''\eta''-l'\eta'''\ne0$ hold at\/ $p$,
\item\label{itm:34}
 a\/ $4/3$-cusp if and only if\/ $l'=0$ and\/
$\eta'l''\ne0$ hold at\/ $p$,
\item\label{itm:35}
 a\/ $5/3$-cusp if and only if\/ 
$l'=\eta'=0$ and\/
$\eta''l'' \ne0$ hold at\/ $p$.
\end{enumerate}
\end{proposition}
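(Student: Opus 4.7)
My approach is to compute the successive derivatives of $\gamma$ at a singular point $p$ (where $l(p)=0$, hence $\gamma'(p)=0$) in the orthonormal moving frame
$$
e(t)=(\cos\phi(t),\sin\phi(t)),\qquad e^{\perp}(t)=(-\sin\phi(t),\cos\phi(t)),
$$
and then to apply the criteria in Fact \ref{fact:cri}. From $\gamma'=le$ and the identities $e'=\phi'e^{\perp}$, $e''=\phi''e^{\perp}-(\phi')^{2}e$, $e'''=(\phi'''-(\phi')^{3})e^{\perp}-3\phi'\phi''e$, the Leibniz rule gives each $\gamma^{(i)}$ as a sum of terms $l^{(k)}e^{(i-1-k)}$. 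Since $\det(e,e^{\perp})=1$, every determinant demanded by Fact \ref{fact:cri} reduces to the product of the $e$-component of one vector and the $e^{\perp}$-component of the other.

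The second ingredient is to translate the $\phi$-derivatives that appear into quantities involving $l$ and $\eta$. From the mean curvature equation \eqref{eq:mean} one has
$$
\phi'(t)=\dfrac{l(t)\cos\phi(t)}{y(t)}-\eta'(t),
$$
so in particular $\phi'(p)=-\eta'(p)$ whenever $l(p)=0$. Differentiating once more, and using $l(p)=0$ together with $y'(p)=l(p)\sin\phi(p)=0$, one obtains
$$
\phi''(p)=\dfrac{l'(p)\cos\phi(p)}{y(p)}-\eta''(p),\qquad
\phi'''(p)=\dfrac{l''(p)\cos\phi(p)}{y(p)}-\eta'''(p).
$$
These are the only derivatives of $\phi$ actually needed at $p$.

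With these tools the four items reduce to short algebraic verifications. For (1) I find $\gamma''(p)=l'(p)e(p)$ and $\gamma'''(p)=l''(p)e(p)+2l'(p)\phi'(p)e^{\perp}(p)$, so
$\det(\gamma'',\gamma''')(p)=-2(l'(p))^{2}\eta'(p)$, yielding the criterion $l'\eta'\neq0$. For (3), $l'(p)=0$ forces $\gamma''(p)=0$, then $\gamma'''(p)=l''(p)e(p)$ and $\gamma^{(4)}(p)=l'''(p)e(p)+3l''(p)\phi'(p)e^{\perp}(p)$, so $\det(\gamma''',\gamma^{(4)})(p)=-3(l''(p))^{2}\eta'(p)$. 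For (4), the extra hypothesis $\eta'(p)=0$ kills the previous determinant; computing further gives $\gamma^{(5)}(p)=l^{(4)}(p)e(p)+6l''(p)\phi''(p)e^{\perp}(p)$, and since $l'(p)=0$ the formula above says $\phi''(p)=-\eta''(p)$, so $\det(\gamma''',\gamma^{(5)})(p)=-6(l''(p))^{2}\eta''(p)$.

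The main obstacle is case (2), the $5/2$-cusp, because Fact \ref{fact:cri} requires the linear dependence $\gamma'''=k\gamma''$ (forcing $\eta'(p)=0$ and identifying $k=l''(p)/l'(p)$) and a nondegeneracy of the combination $3\gamma^{(5)}-10k\gamma^{(4)}$. Here the full formulas for $\gamma^{(4)}(p)=l'''e+3l'\phi''e^{\perp}$ and $\gamma^{(5)}(p)=l^{(4)}e+(6l''\phi''+4l'\phi''')e^{\perp}$ are needed; the $e$-components conspire into an unimportant coefficient of $e$, while the $e^{\perp}$-components combine as
$$
3(6l''\phi''+4l'\phi''')-10\,\tfrac{l''}{l'}\cdot 3l'\phi''
=12(l'\phi'''-l''\phi'').
$$
Substituting the expressions for $\phi''(p),\phi'''(p)$ above causes the $\cos\phi/y$ terms to cancel exactly, leaving $l'\phi'''-l''\phi''=l''\eta''-l'\eta'''$ at $p$. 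Hence $\det(\gamma'',3\gamma^{(5)}-10k\gamma^{(4)})(p)=12\,l'(p)\bigl(l''(p)\eta''(p)-l'(p)\eta'''(p)\bigr)$, giving the stated criterion in (2).
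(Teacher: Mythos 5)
Your computation is correct and reaches exactly the determinant values the paper obtains (e.g.\ $\det(\gamma'',3\gamma^{(5)}-10k\gamma^{(4)})(p)=12l'(l''\eta''-l'\eta''')$), but you organize it differently. The paper never reintroduces $\phi$: it differentiates the explicit representation $\gamma'=(l/y)R_{-\eta}U$ from \eqref{eq:gammap}, so the $\eta$-derivatives enter directly through $R_{-\eta}'=-\eta'R_{-\eta+\pi/2}$, and the vanishing at $p$ comes from $l(p)=0$, $y'(p)=0$, $U'(p)=0$. You instead work in the frontal frame $\gamma'=le$, $e=(\cos\phi,\sin\phi)$, and use the ODE \eqref{eq:mean} to convert $\phi'(p),\phi''(p),\phi'''(p)$ into $-\eta'(p)$, $l'\cos\phi/y-\eta''$, etc., with the $\cos\phi/y$ terms cancelling in the final determinants. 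Your route has the virtue of not depending on the particular integral representation \eqref{eq:soly}--\eqref{eq:solx} — it applies to any frontal profile curve satisfying \eqref{eq:mean} — whereas the paper's route stays entirely inside the solution formula and needs no second appeal to the differential equation. Both hinge on the same mechanism: $l(p)=0$ kills almost every term in the Leibniz expansions before Fact \ref{fact:cri} is applied.

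One small imprecision to fix: as stated, your formula $\phi'''(p)=l''(p)\cos\phi(p)/y(p)-\eta'''(p)$ is missing the term $-2l'(p)\phi'(p)\sin\phi(p)/y(p)$ coming from differentiating $l\cos\phi/y$ twice. This does not affect your argument, because $\phi'''(p)$ is only used in case (2), where $\eta'(p)=0$ forces $\phi'(p)=0$ and the omitted term vanishes; but you should either record that hypothesis next to the formula or include the extra term and let it drop out later.
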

\begin{proof}
By \eqref{eq:gammap},
we have
\begin{equation}\label{eq:gammapp}
\gamma''
=
l'\left(y^{-1}\right)R_{-\eta} U
+
l\left(y^{-1}\right)'R_{-\eta} U
+
l\left(y^{-1}\right)R_{-\eta}' U
+
l\left(y^{-1}\right)R_{-\eta} U'
\end{equation}
and since $l(p)=0$, so $y'(p)=0$ and $U'(p)=0$ hold.
Then we have
$\gamma''(p)=l'(p)y(p)^{-1}R_{-\eta}(p) U(p)$.
Thus $\gamma''(p)\ne0$ if and only if $l'(p)\ne0$.
We assume that $l'(p)\ne0$.
Then by \eqref{eq:gammapp},
\begin{align}
\label{eq:gammappp}
\gamma'''
=&
l''\left(y^{-1}\right)R_{-\eta} U
+
l\left(y^{-1}\right)''R_{-\eta} U
+
l\left(y^{-1}\right)R_{-\eta}'' U
+
l\left(y^{-1}\right)R_{-\eta} U''\\
&+
2l'\left(y^{-1}\right)'R_{-\eta}U
+
2l'\left(y^{-1}\right)R_{-\eta}'U
+
2l'\left(y^{-1}\right)R_{-\eta}U'\nonumber\\
&+
2l\left(y^{-1}\right)'R_{-\eta}'U
+
2l\left(y^{-1}\right)'R_{-\eta}U'
+
2l\left(y^{-1}\right)R_{-\eta}'U',\nonumber
\end{align}
and since $l(p)=y'(p)=0$, $U'(p)=0$, and
$R_{-\eta}'=(-\eta')R_{-\eta+\pi/2}$,
$$\gamma'''=
\Big(l''R_{-\eta} U
+
2l'R_{-\eta}'U\Big)y^{-1}
=
\Big(l''R_{-\eta} U
+
2l'(-\eta')R_{-\eta+\pi/2}U\Big)y^{-1}
$$
holds at $p$.
Hence $\det(\gamma'',\gamma''')(p)\ne0$ if and only if
$\eta'(p)\ne0$, and this proves \ref{itm:23}.
We assume $\eta'(p)=0$.
Then we see $k$ in \ref{itm:25} is $l''(p)/l'(p)$.
Now we calculate $\det(\gamma'',3\gamma^{(5)}-10k\gamma^{(4)})(p)$.
Differentiating \eqref{eq:gammapp} with noticing
$l(p)=y'(p)=\eta'(p)=0$ and $U'(p)=0$,
we have
$$
3\gamma^{(5)}-10k\gamma^{(4)}
=\Big(
- l'' R_{-\eta}'' U 
+ l'  R_{-\eta}''' U
- l'' R_{-\eta} U'' 
+ l'  R_{-\eta} U''' \Big)y^{-1}+*R_{-\eta}U
$$
at $p$, where $*$ stands for a real number.
Then
we see that
$\det(\gamma'',3\gamma^{(5)}-10k\gamma^{(4)})(p)=
12 l'(p) \big(l''(p) \eta''(p)-l'(p) \eta'''(p)\big)$.
This proves the assertion \ref{itm:25}.

Next we assume $\gamma''(p)=0$, namely, $l'(p)=0$.
Then by \eqref{eq:gammappp}, 
$$
\gamma'''=l''(y^{-1})R_{-\eta}U,\quad
\gamma^{(4)}=
3 l''(y^{-1}) R_{-\eta}' U
+
*R_{-\eta}U.
$$
Since $R_{-\eta}'=-\eta R_{-\eta+\pi/2}$,
this proves \ref{itm:34}.
We assume that $\eta'(p)=0$.
Then differentiating \eqref{eq:gammappp} twice, 
we have
$$
\gamma^{(5)}
=
6 l''(R_{-\eta}'' U+R_{-\eta}  U'') y^{-1} 
+*R_{-\eta}U.
$$
at $p$.
Since $U''=l'\,\trans{(\sin\eta,\cos\eta)}+l\,\trans{(\sin\eta,\cos\eta)'}=0$
at $p$,
we have \ref{itm:35},
where $\trans{(~)}$ stands for the matrix transportation.
\end{proof}
\begin{example}\label{ex:23}
Let us set $H=1/t$ and $l=t$ with $c_1=c_2=1/10$.
Then by Proposition \ref{prop:singcond},
$\gamma$ at $t=0$ is $3/2$-cusp.
The profile curve can be drawn 
as in Figure \ref{fig:23}.
\begin{figure}[ht]
\centering
\begin{tabular}{ccc}
\includegraphics[width=.4\linewidth]
{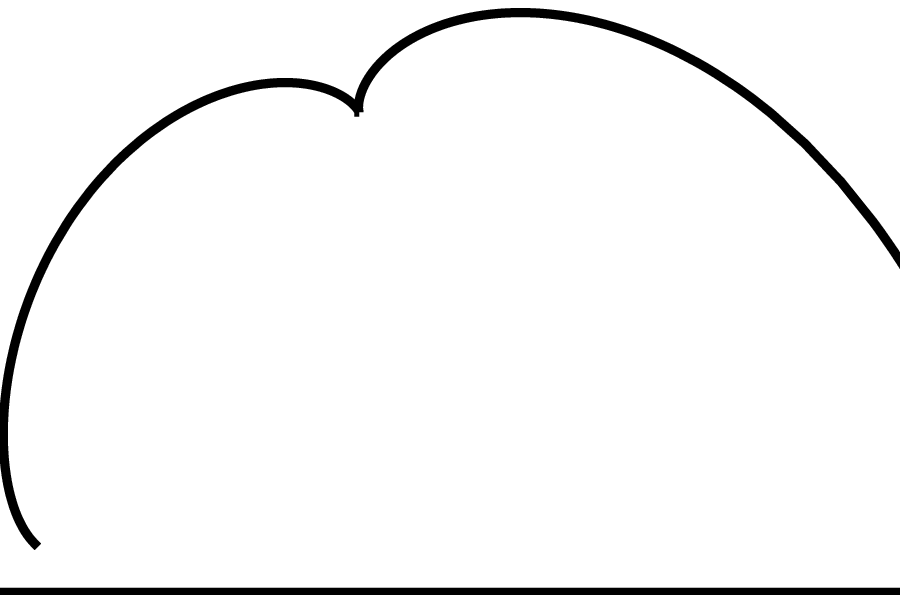}&
\hspace*{10mm}&
\includegraphics[width=.4\linewidth]
{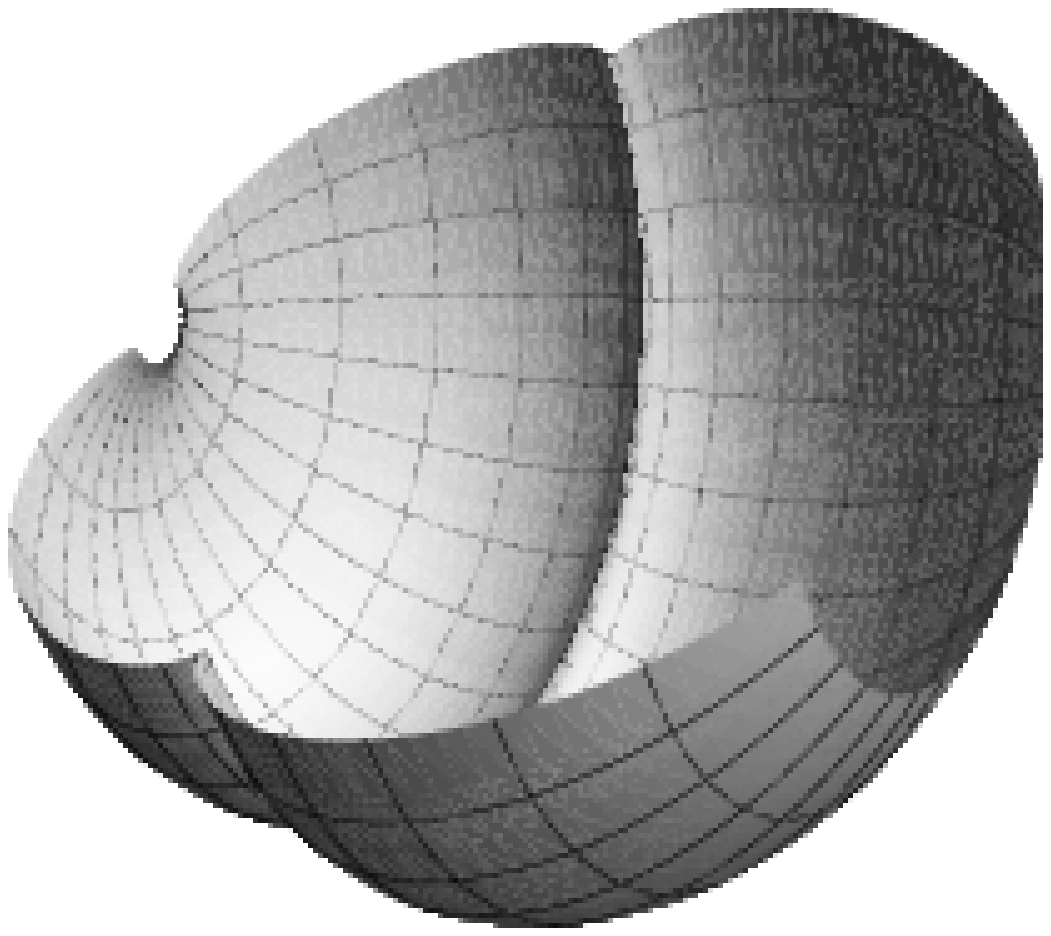}
\end{tabular}
\caption{The profile curve and the revolution surface of Example
\ref{ex:23}.
The horizontal line stands for the $x$-axis.} 
\label{fig:23}
\end{figure}
\end{example}
\begin{example}\label{ex:253435}
Let us set $H=1+t$ and $l=t$ with $c_1=c_2=1/10$.
Then by Proposition \ref{prop:singcond},
$\gamma$ at $t=0$ is $5/2$-cusp.
Let us set $H=1/t^2$ and $l=t^2$ with $c_1=c_2=1/10$.
Then by Proposition \ref{prop:singcond},
$\gamma$ at $t=0$ is $4/3$-cusp.
Let us set $H=1/t$ and $l=t^2$ with $c_1=c_2=1/10$.
Then by Proposition \ref{prop:singcond},
$\gamma$ at $t=0$ is $5/3$-cusp.
The profile curves can be drawn 
as in Figure \ref{fig:253435}.
The singular points are indicated by the arrows.
\begin{figure}[ht]
\centering
\begin{tabular}{ccccc}
\includegraphics[width=.16\linewidth]
{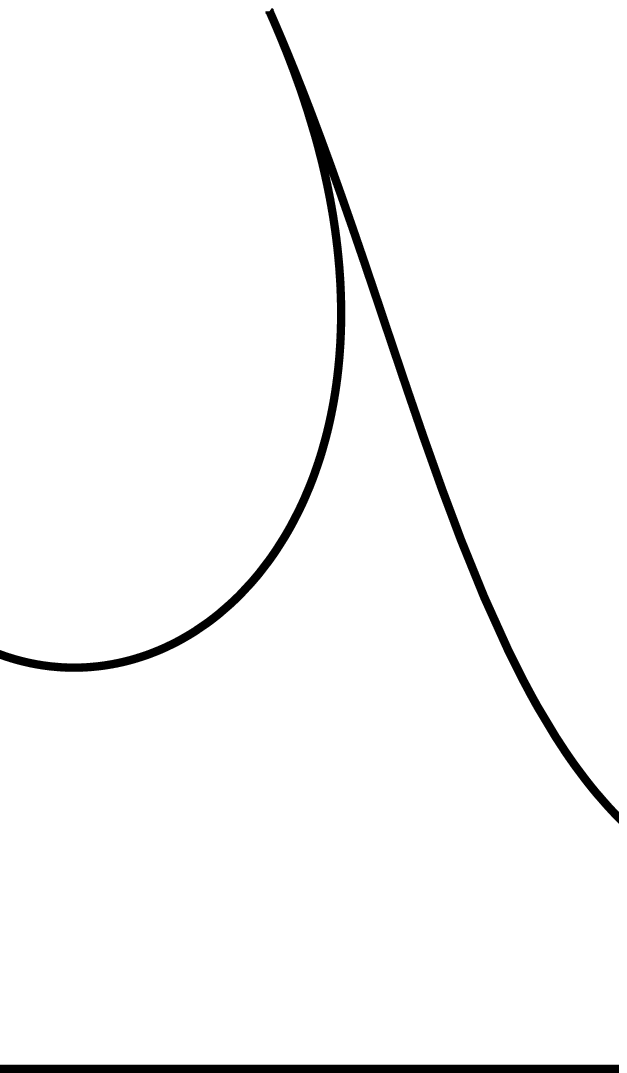}&
\hspace{5mm}&
\includegraphics[width=.25\linewidth]
{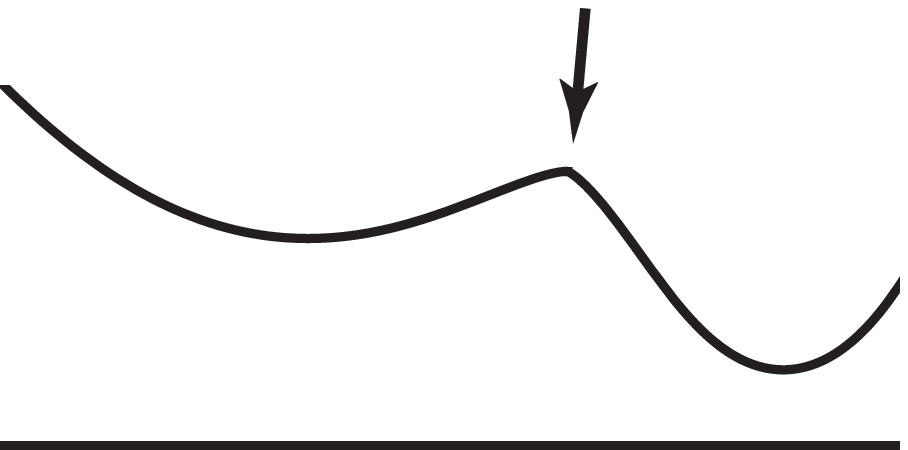}&
\hspace{5mm}&
\includegraphics[width=.18\linewidth]
{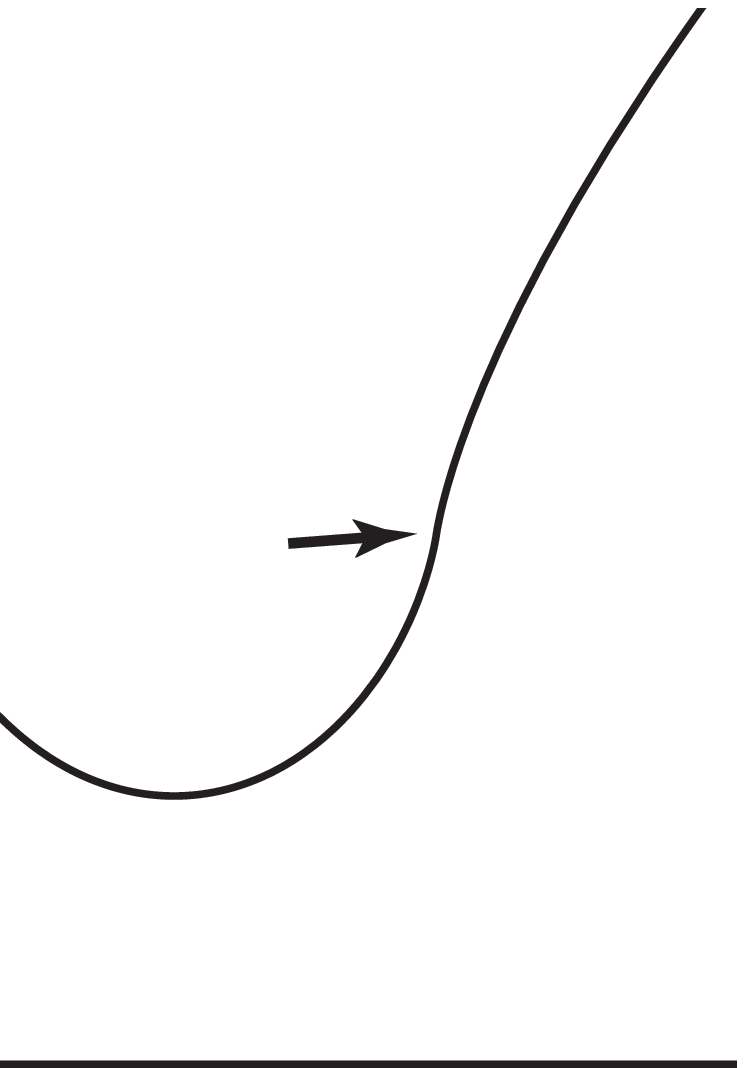}
\end{tabular}
\caption{The profile curves of Example
\ref{ex:253435}.
The horizontal lines stand for the $x$-axis.} 
\label{fig:253435}
\end{figure}
\end{example}

By Proposition \ref{prop:singcond},
we can study the singularities of the revolution surface.
A singular point $q$ of a map $f:(\R^2,q)\to(\R^3,0)$ is 
called a {\it $j/i$-cuspidal edge\/}
if $f$ at $q$ is $\mathcal{A}$-equivalent to
$(u,v)\mapsto(u^i,u^j,v)$ at $0$.
Since the map-germ
$(x,y,z)\mapsto(x,y\cos z,y\sin z)$
is a diffeomorphism if $y\ne0$,
the map-germ $s$ in \eqref{eq:revol} at $(p,\theta)$
is a $j/i$-cuspidal edge if and only if
the profile curve $\gamma=(x,y)$ at $p$ is 
a $j/i$-cusp.

\section{Periodicity}
In this section, we study the condition for
periodicity of 
surfaces when $H$ and $l$ are periodic,
where the condition for regular case is obtained 
by Kenmotsu \cite{k2}.
We define the profile curve $(x,y)$ of 
the surface of revolution given by
\eqref{eq:revol} being {\it periodic\/}
with the period $L$
if there exists $T>0$ such that
$x(s+L)=x(s)+T$ and $y(s+L)=y(s)$.
Then we have the following theorem.
\begin{theorem}\label{thm:peri}
Let\/ $H:\R\setminus P\to\R$
and\/ $l:\R\to\R$ 
be periodic\/ $C^\infty$ functions
of the same period\/ $L$, where\/ $P$ is a discrete set
satisfying that $Hl$ can be extended to a $C^\infty$ function
on $\R$ and $P=l^{-1}(0)$.
Then the solution\/ $(x,y)$ in\/
\eqref{eq:solx}, \eqref{eq:soly} of\/ \eqref{eq:mean}
with\/ \eqref{eq:tangent} is periodic
if and only if\/ $1-\cos\eta(L)\ne0$ and
\begin{equation}\label{eq:period050}
\cos\!\left(\phi(0)+\dfrac{\eta(L)}{2}\right)
\int_0^L l(u)\sin\eta(u)\,du
=
\sin\!\left(\phi(0)+\dfrac{\eta(L)}{2}\right)
\int_0^L l(u)\cos\eta(u)\,du,
\end{equation}
or\/ $1-\cos\eta(L)=0$ and
\begin{equation}\label{eq:period070}
\int_0^L l(u)\sin\eta(u)\,du
=
\int_0^L l(u)\cos\eta(u)\,du
=0,
\end{equation}
where $(x'(0),y'(0))=l(0)(\cos\phi(0),\sin\phi(0))$.
\end{theorem}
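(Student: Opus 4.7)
The plan is to translate the periodicity of $(x,y)$ into an algebraic condition on an auxiliary planar vector. Set $V(t) = (F(t) - c_1,\,G(t) - c_2)$, so that $y(t)^2 = |V(t)|^2$ by \eqref{eq:soly}. Since $l$ and $lH$ are $L$-periodic, one has $\eta(t+L) = \eta(t) + \eta(L)$; changing variables $u \mapsto u - L$ in the integrals defining $F$ and $G$ and applying the addition formulas yields the affine recursion
\begin{equation*}
V(t+L) = R\,V(t) + w,
\end{equation*}
where $R$ is the planar rotation of angle $-\eta(L)$ and, with $a := \int_0^L l\sin\eta\,du$ and $b := \int_0^L l\cos\eta\,du$,
\begin{equation*}
w = \bigl(\,a + c_1(\cos\eta(L)-1) + c_2\sin\eta(L),\ b - c_1\sin\eta(L) + c_2(\cos\eta(L)-1)\,\bigr).
\end{equation*}

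I claim $y(t+L) = y(t)$ for all $t$ is equivalent to $w = 0$. Indeed, $|V(t+L)|^2 - |V(t)|^2 = 2(RV(t))\cdot w + |w|^2$, so norm-preservation forces $(RV(t))\cdot w$ to be constant in $t$. Differentiating and using $V'(t) = l(t)(\sin\eta(t),\cos\eta(t))$ gives $l(t)\bigl[(\sin\eta(t),\cos\eta(t))\cdot R^{-1}w\bigr] = 0$ for all $t$; since $P = l^{-1}(0)$ is discrete, the bracketed factor vanishes on the dense set $\{l \neq 0\}$, hence everywhere by continuity. In Case 1 the function $\eta$ is not constant (otherwise $\eta(L) = 0$ and $\cos\eta(L) = 1$, contrary to hypothesis), so the unit vector $(\sin\eta(t),\cos\eta(t))$ sweeps more than a single direction, forcing $R^{-1}w = 0$ and hence $w = 0$. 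In Case 2 we have $R = I$ and the $c_j$-coefficients in $w$ vanish, so $w = (a,b)$ and $w = 0$ is literally \eqref{eq:period070}.

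To convert $w = 0$ into \eqref{eq:period050} in Case 1, I evaluate the Section~1 formula for $z(t)$ at $t = 0$ to obtain $c_1 = y(0)\cos\phi(0)$ and $c_2 = -y(0)\sin\phi(0)$. Substituting into the two components of $w = 0$ and applying the half-angle identities $1 - \cos\eta(L) = 2\sin^2(\eta(L)/2)$ and $\sin\eta(L) = 2\sin(\eta(L)/2)\cos(\eta(L)/2)$, the equations collapse, via the sine-addition formula, to
\begin{equation*}
a = 2y(0)\sin\tfrac{\eta(L)}{2}\sin\!\Bigl(\phi(0)+\tfrac{\eta(L)}{2}\Bigr),\qquad b = 2y(0)\sin\tfrac{\eta(L)}{2}\cos\!\Bigl(\phi(0)+\tfrac{\eta(L)}{2}\Bigr).
\end{equation*}
Since $\sin(\eta(L)/2) \neq 0$ in Case 1, eliminating the common factor $2y(0)\sin(\eta(L)/2)$ between these two equations produces exactly \eqref{eq:period050}, while the residual magnitude equation uniquely determines the free initial datum $y(0) > 0$.

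Finally, once $w = 0$ the recursion reads $V(t+L) = RV(t)$ and $V'(t+L) = RV'(t)$; the rotation $R$ preserves both $|V|$ (so $y(t+L) = y(t)$) and the $2\times 2$ determinant $\det(V',V) = F'(G-c_2) - G'(F-c_1)$ in the numerator of \eqref{eq:solx}. Hence $x'(t+L) = x'(t)$, so $T := x(t+L) - x(t) = \int_0^L x'(u)\,du$ is independent of $t$, completing the periodicity of the profile curve. The principal obstacle is the second paragraph: passing from the scalar identity $y(t+L) = y(t)$ (holding for all $t$) to the algebraic vanishing of the single vector $w$, which relies both on the discreteness of $P$ and on the non-constancy of $\eta$ guaranteed in Case 1.
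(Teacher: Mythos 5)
Your proof is correct and, at its core, follows the same strategy as the paper's: the periodicity condition is the linear system in $(c_1,c_2)$ that you call $w=0$ (it is exactly \eqref{eq:period100}--\eqref{eq:period200}), its solution when $1-\cos\eta(L)\neq0$ is \eqref{eq:period300}--\eqref{eq:period400}, the passage to \eqref{eq:period050} uses $(c_1,-c_2)=y(0)(\cos\phi(0),\sin\phi(0))$, and the converse uses the recursion $V(t+L)=RV(t)+w$, which is the paper's pair of identities for $F(t+L)$ and $G(t+L)$. The genuine difference is in how the necessary condition is extracted: the paper evaluates $y(0)=y(L)$ and $\gamma'(0)=\gamma'(L)$ at the single point $t=0$ and must assume $l(0)\neq0$ in order to cancel $l(0)/y(0)$ in \eqref{eq:gammap}, whereas you use $y(t+L)=y(t)$ for all $t$, differentiate, and invoke the discreteness of $l^{-1}(0)$ together with the non-constancy of $\eta$. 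This makes the argument base-point free and removes the hypothesis $l(0)\neq0$. Two small points to tidy up. First, your sweeping argument is stated only for Case 1; in Case 2 with $\eta$ non-constant it applies verbatim, while if $\eta\equiv0$ it yields only $b=0$ --- but then $a=\int_0^L l(u)\sin\eta(u)\,du=0$ automatically, so $w=0$ still follows; one sentence covering this subcase would close the loop. Second, in the converse your elimination of the factor $2y(0)\sin(\eta(L)/2)$ tacitly assumes that the required proportionality constant is realizable with $y(0)>0$; the paper's converse has the same implicit assumption when it selects $c_1,c_2$ by \eqref{eq:period300}--\eqref{eq:period400} without checking compatibility with the prescribed $\phi(0)$, so this is a shared, not a newly introduced, imprecision.
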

Kenmotsu gave the condition for 
the case of the profile curve is regular \cite[Theorem 1]{k2}.
If the profile curve is regular, the above condition 
is the same as Kenmotsu's condition.
In fact, for regular case, since one can take $t=0$ giving the minimum
of $y$, we can assume that $\phi(0)=0$.
However, in our case, the profile curve may have singularities,
the existence of $t_0$ such that $\phi(t_0)=0$ fails in general.
One can show Theorem \ref{thm:peri} by the similar method to 
Kenmotsu \cite[Theorem 1]{k2},
we give a proof here for the completion.
\begin{proof}[Proof of Theorem\/ {\rm \ref{thm:peri}}]
Let us assume 
$l(0)\ne0$ and $(x'(0),y'(0))=l(0)(\cos\phi(0),\sin\phi(0))$.
By \eqref{eq:gammap} together with $y(0)=y(L)$, $y'(0)=y'(L)$ 
$x'(0)=x'(L)$ and $l(0)=l(L)$
yield that
\begin{align}
\label{eq:period100}
-c_2
&=
\sin \eta(L) (F(L) - c_1) + \cos \eta(L) (G(L) - c_2),\\
\label{eq:period200}
c_1
&=
\sin\eta(L)(G(L) - c_2) - \cos\eta(L) (F(L) - c_1).
\end{align}
If $1-\cos\eta(L)\ne0$ then,
\eqref{eq:period100} and \eqref{eq:period200} is equivalent to
\begin{align}
\label{eq:period300}
c_1&=\dfrac{F(L)-F(L) \cos\eta(L)+G(L) \sin\eta(L)}{2(1-\cos\eta(L))},\\
\label{eq:period400}
c_2&=\dfrac{G(L)-G(L) \cos\eta(L)-F(L) \sin\eta(L)}{2(1-\cos\eta(L))}.
\end{align}
On the other hand, by \eqref{eq:soly}, \eqref{eq:solx},
$(\cos\phi(0),\sin\phi(0))$ is parallel to $(c_1,-c_2)$,
$$
\det\pmt{
\cos\phi(0)&F(L)-F(L) \cos\eta(L)+G(L) \sin\eta(L)\\
\sin\phi(0)&-(G(L)-G(L) \cos\eta(L)-F(L) \sin\eta(L))}=0.
$$
This is equivalent to \eqref{eq:period050}.
If $1-\cos\eta(L)=0$,
\eqref{eq:period100} and \eqref{eq:period200} are equivalent to
$F(L)=G(L)=0$, and this implies \eqref{eq:period070}.

Conversely, we assume that for periodic functions
$H$ and $l$ with period $L$ satisfy
the condition $1-\cos\eta(L)\ne0$ and \eqref{eq:period050},
or $1-\cos\eta(L)=0$ and \eqref{eq:period070}.
By definition of $\eta$, we have $\eta(u+L)=\eta(u)+\eta(L)$.
Then by definitions of $F,G$,
we have
\begin{align*}
F(t+L)=F(L)+\sin\eta(L)G(t)+\cos\eta(L)F(t),\\
G(t+L)=G(L)+\cos\eta(L)G(t)-\sin\eta(L)F(t).
\end{align*}
If $1-\cos\eta(L)\ne0$, a direct calculation shows that
$y$ given by \eqref{eq:soly}
with \eqref{eq:period300}, \eqref{eq:period400}
satisfies $y(t+L)=y(t)$,
and also we see that
$x'$ given by \eqref{eq:solx}
with \eqref{eq:period300}, \eqref{eq:period400}
satisfies $x'(t+L)=x'(t)$.
If $1-\cos\eta(L)=0$, then $\eta(L)=0$, and
we have $F(L)=G(L)=0$.
This shows the desired periodicities of $x$ and $y$.
\end{proof}

\begin{example}\label{ex:period1}
Let us set $H=1/\sin t$ and $l=\sin t$ with $c_1=1$, 
$c_2=3/4$.
This satisfies the condition in Theorem \ref{thm:peri},
and the profile curve is periodic.
The profile curve can be drawn 
as in Figure \ref{fig:periodic}.
Each singularity is $3/2$-cusp by Proposition
\ref{prop:singcond}.
\begin{figure}[ht]
\centering
\begin{tabular}{ccc}
\includegraphics[width=.4\linewidth]
{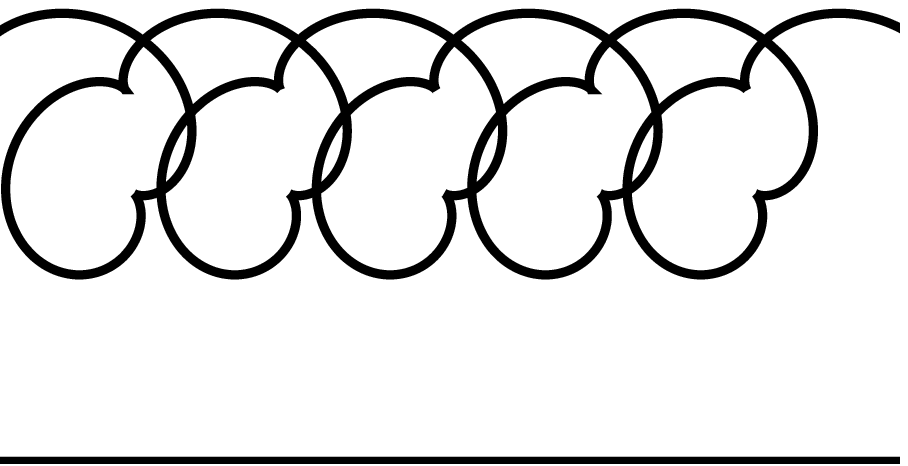} &\hspace*{15mm}&
\includegraphics[width=.3\linewidth]
{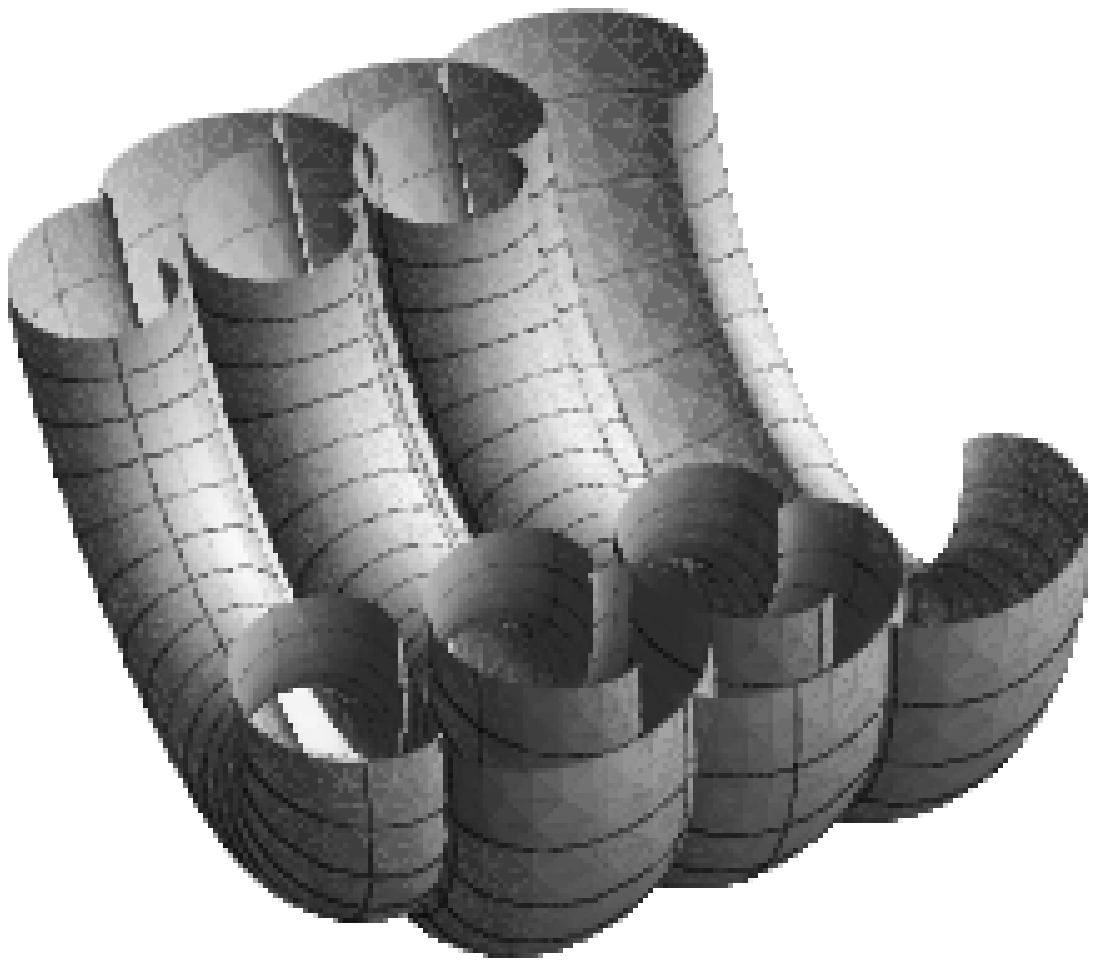}
\end{tabular}
\caption{Profile curve and the revolution surface of Example
\ref{ex:period1}.
The horizontal line stands for the $x$-axis.} 
\label{fig:periodic}
\end{figure}
\end{example}
\begin{example}\label{ex:period2tan}
Let us set $H=\tan t$ and $l=\cos t$
with $c_1=c_2=1/10$.
A numerical computation shows that
$H$ and $l$ do not satisfy the condition in 
Theorem \ref{thm:peri},
the profile curve is not periodic
as we can see in Figure \ref{fig:period2tan}.
Each singularity is $3/2$-cusp by Proposition
\ref{prop:singcond}.
\begin{figure}[ht]
\centering
\begin{tabular}{ccc}
\includegraphics[width=.05\linewidth]
{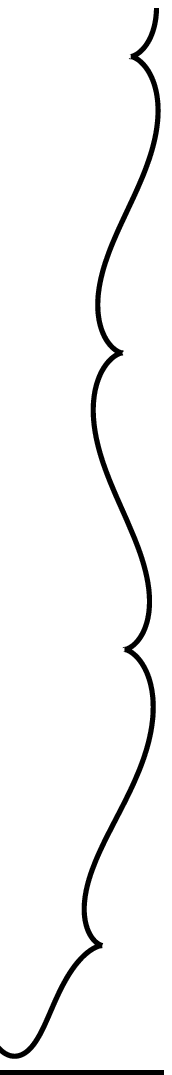} &\hspace*{10mm}&
\includegraphics[width=.2\linewidth]
{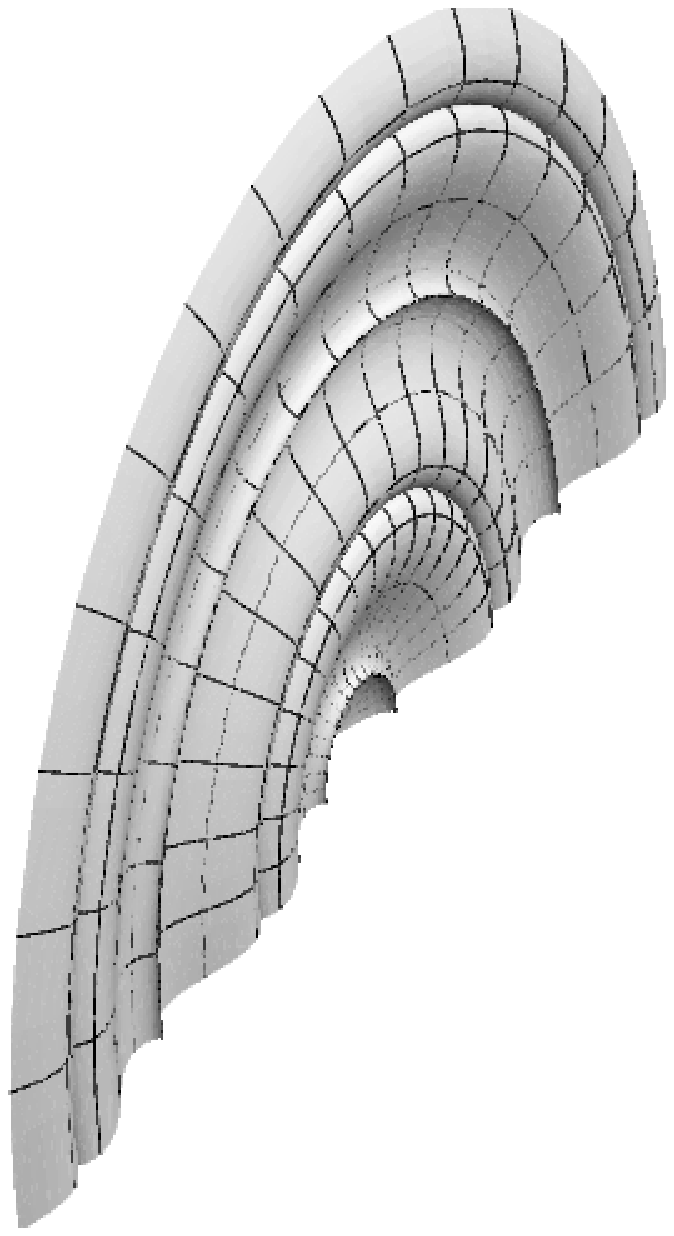}
\end{tabular}
\caption{Profile curve and the revolution surface of Example
\ref{ex:period2tan}.
The horizontal line stands for the $x$-axis.} 
\label{fig:period2tan}
\end{figure}
\end{example}
\begin{example}\label{ex:period2}
Let us set $H=1/\sin^2 t$ and $l=\sin^2 t$
with $c_1=c_2=1/10$.
This does not satisfy the condition in Theorem \ref{thm:peri},
the profile curve is not periodic
as we can see in Figure \ref{fig:periodic2}.
Each singularity is $4/3$-cusp by Proposition
\ref{prop:singcond}, and they are indicated by the arrows.

\begin{figure}[ht]
\centering
\begin{tabular}{ccc}
\includegraphics[width=.12\linewidth]
{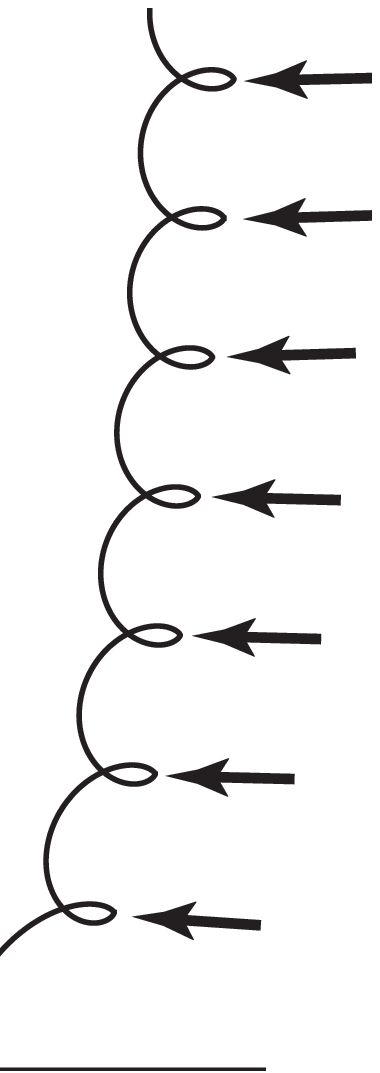} &\hspace*{10mm}&
\includegraphics[width=.55\linewidth]
{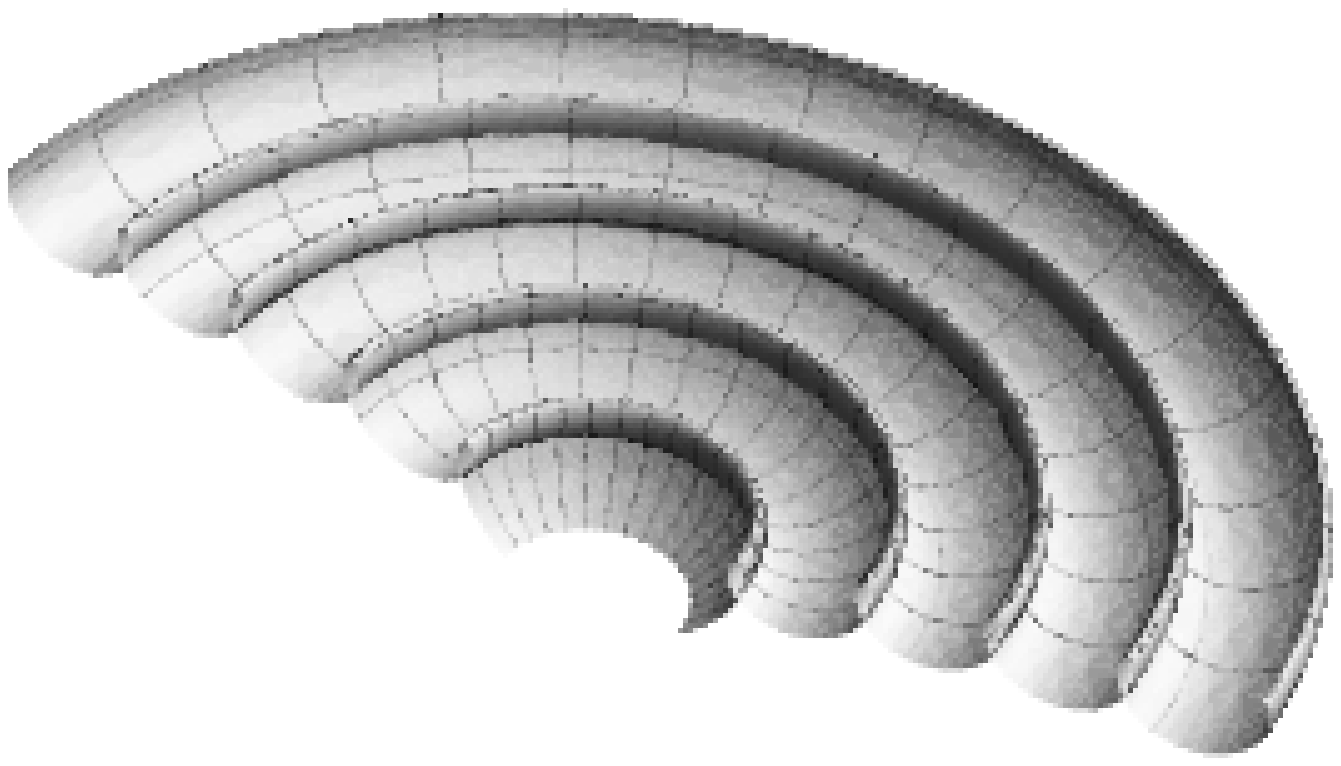}
\end{tabular}
\caption{Profile curve and the revolution surface of Example
\ref{ex:period2}.
The horizontal line stands for the $x$-axis.} 
\label{fig:periodic2}
\end{figure}
\end{example}

The authors would like to thank Kenichi Ito for helpful
advices, and Yoshihito Kohsaka for encouragement.



\medskip
{\footnotesize
\begin{flushright}
\begin{tabular}{ll}
\begin{tabular}{l}
(Martins and Santos)\\
Departamento de Matem{\'a}tica,\\
Instituto de Bioci\^{e}ncias, \\
Letras e Ci\^{e}ncias Exatas,\\
 UNESP - Universidade Estadual Paulista,\\
  C\^{a}mpus de S\~{a}o Jos\'{e} do Rio Preto, \\
SP, Brazil\\
  E-mail: {\tt lmartinsO\!\!\!aibilce.unesp.br}\\
  E-mail: {\tt samuelp.santosO\!\!\!ahotmail.com}
\end{tabular}
&
\begin{tabular}{l}
(Saji and Teramoto)\\
Department of Mathematics,\\
Graduate School of Science, \\
Kobe University, \\
Rokkodai 1-1, Nada, Kobe \\
657-8501, Japan\\
  E-mail: {\tt sajiO\!\!\!amath.kobe-u.ac.jp}\\
  E-mail: {\tt teramotoO\!\!\!amath.kobe-u.ac.jp}\\
\phantom{a}
\end{tabular}
\end{tabular}
\end{flushright}}
\end{document}